\DeclareMathOperator{\id}{id}
\DeclareMathOperator{\cano}{cano}
\newtheorem{theorem}{Theorem}
\newtheorem{lemma}[theorem]{Lemma}
\newtheorem{proposition}[theorem]{Proposition}
\newtheorem{corollary}[theorem]{Corollary}
\newtheorem{definition}[theorem]{Definition}
\newtheorem{example}[theorem]{Example}
\title[Polynomial invariants of hypergraphs]{Polynomial invariants and reciprocity theorems for the Hopf monoid of hypergraphs and its sub-monoids}
\author{Jean-Christophe Aval\addressmark{1}, Théo Karaboghossian\addressmark{1}, \and Adrian Tanasa\addressmark{1} \addressmark{2} \addressmark{3}}
\address{\addressmark{1}Univ. Bordeaux, Bordeaux INP, CNRS, LaBRI, UMR5800, F-33400 Talence, France, EU\\ \addressmark{2}IUF Paris, France, EU \\ \addressmark{3}H. Hulubei Nat. Inst. Phys. Nucl. Engineering Magurele, Romania, EU}
\abstract{In arXiv:1709.07504 Aguiar and Ardila give a Hopf monoid structure on hypergraphs as well as a general construction of polynomial invariants on Hopf monoids. Using these results, we define in this paper a new polynomial invariant on hypergraphs. We give a combinatorial interpretation of this invariant on negative integers which leads to a reciprocity theorem on hypergraphs. Finally, we use this invariant to recover well-known invariants on other combinatorial objects (graphs, simplicial complexes, building sets etc) as well as the associated reciprocity theorems.}
\keywords{Hopf monoid, Hypergraphs}
\begin{document}

\maketitle

\section{Introduction}

In combinatorics, Hopf structures give an algebraic framework to deal with operations of merging (product) and splitting (coproduct) combinatorial objects. The notion of Hopf algebra is well known and used in combinatorics for over 30 years, and has proved its great strength in various questions (see for example \cite{HAC}). More recently, Aguiar and Mahajan defined a notion of Hopf monoid \cite{am2},\cite{am} akin to the notion of Hopf algebra and built on Joyal’s theory of species \cite{joy}.
Such as in the case of Hopf algebras, a useful application of Hopf monoids is to define and compute polynomial invariants (see \cite{ABS}, \cite{BB}, \cite{DKT} or \cite{KMT} for various examples), as was put to light by the recent and extensive paper of Aguiar and Ardila \cite{AA}. In particular they give a theorem to generate various polynomial invariants and use it to recover the chromatic polynomial of graphs, the Billera-Jia-Reiner polynomial of matroids and the strict order polynomial of posets. Furthermore they also give a way to compute these polynomial invariants on negative integers hence also recovering the different reciprocity theorems associated to these combinatorial objects.

In this paper, we apply Aguiar and Ardila's theorem to the Hopf monoid of hypergraphs defined in \cite{AA}. This Hopf structure is different than the one defined and studied  in \cite{HypB} (the respective coproducts are different).
We obtain a combinatorial description for the (basic) invariant $\chi_I(H)(n)$ in terms of colorings of hypergraphs (Theorem \ref{chi}). We then use another approach (rather technical) than the method of \cite{AA} to get a reciprocity theorem for hypergraphs (Theorem \ref{chi-n}).
Finally, we use these results to obtain polynomial invariants on sub-monoids of the Hopf monoid of hypergraphs.

This paper is an extended abstract, all the detailed proofs can be found in the paper \cite{papier} as well as more results on sub-monoids of the Hopf monoid of hypergraphs.

\section{Definitions and reminders}
\subsection{Hopf monoids}

We present here basic definitions on Hopf monoids. The interested reader may refer to \cite{am} and to \cite{AA} for more information on this subject. In this paper $\Bbbk$ is a field and all vector spaces are over $\Bbbk$.

\begin{definition}
A \emph{vector species} $P$ consists of the following data.
\begin{itemize}
\item For each finite set $I$, a vector space $P[I]$.
\item For each bijection of finite sets $\sigma: I\rightarrow J$, a linear map $P[\sigma]:P[I]\rightarrow P[J]$. These maps should be such that $P[\sigma\circ\tau] = P[\sigma]\circ P[\tau]$ and $P[\id] = \id$.
\end{itemize}

A \emph{sub-species} of a vector species $P$ is a vector species $Q$ such that for each finite set $I$, $Q[I]$ is a sub-space of $P[I]$ and for each bijection of finite sets $\sigma: I\rightarrow J$, $Q[\sigma] = P[\sigma]_{|Q[I]}$.

A \emph{morphism} $f: P\rightarrow Q$ between vector species is a collection of linear maps $f_I : P[I] \rightarrow Q[I]$ satisfying the naturality axiom: for each bijection $\sigma: I\rightarrow J$, $f_J\circ P[\sigma] = Q[\sigma]\circ f_I$. 
\end{definition}

\begin{definition}
A \emph{connected Hopf monoid in vector species} is a vector species $M$ with $M[\emptyset] = \Bbbk$ that is equipped with product and co-product linear maps
\begin{displaymath}
\mu_{S,T}: M[S]\otimes M[T] \rightarrow M[S\sqcup T] \qquad \Delta_{S,T}: M[S\sqcup T] \rightarrow M[S]\otimes M[T],
\end{displaymath}
with $S$ and $T$ disjoint sets, and subject to the following axioms:
\begin{itemize}
\item \emph{Naturality}. For each pair of disjoint sets $S$, $T$, each bijection $\sigma$ with domain $S\sqcup T$, we have $M[\sigma]\circ\mu_{S,T} = \mu_{\sigma(S),\sigma(T)}\circ M[\sigma_{|S}]\otimes M[\sigma_{|T}]$ and $M[\sigma_{|S}]\otimes M[\sigma_{|T}]\circ\Delta_{S,T} = \Delta_{\sigma(S),\sigma(T)}\circ M[\sigma]$.
\item \emph{Unitality}. For each set $I$, $\mu_{I,\emptyset}$, $\mu_{\emptyset,I}$, $\Delta_{I,\emptyset}$ and $\Delta_{\emptyset,I}$ are given by the canonical isomorphisms $M[I]\otimes\Bbbk\cong\Bbbk\cong\Bbbk\otimes M[I]$.
\item \emph{Associativity}. For each triplet of pairwise disjoint sets $R$,$S$, $T$ we have: $\mu_{R,S\sqcup T}\circ\id\otimes\mu_{S,T}\linebreak[4] = \mu_{R\sqcup S, T}\circ \mu_{R,S}\otimes\id$.
\item \emph{Co-associativity}. For each triplet of pairwise disjoint sets $R$,$S$, $T$ we have: $\Delta_{R,S}\otimes\id\circ\Delta_{R\sqcup S, T} = \id\otimes\Delta_{S,T}\circ\Delta_{R,S\sqcup T}$.
\item \emph{Compatibility}. For each pair of disjoint sets $A$, $B$, each pair of disjoint sets $C$, $D$ we have the following commutative diagram, where $\tau$ maps $x\otimes y$ to $y\otimes x$:
\begin{center}
\begin{tikzcd}
P[S]\otimes P[T] \arrow[d, "\Delta_{A,B}\otimes\Delta_{C,D}" swap] \arrow[r, "\mu_{S,T}"] & P[I] \arrow[r, "\Delta_{S',T'}"] & P[S']\otimes P[T']  \\
P[A]\otimes P[B]\otimes P[C]\otimes P[D] \arrow[rr, "\id\otimes\tau\otimes\id" swap] & & P[A]\otimes P[C]\otimes P[B]\otimes P[D] \arrow[u, "\mu_{A,C}\otimes\mu_{B,D}" swap]
\end{tikzcd}
\end{center}
\end{itemize}

A \emph{sub-monoid} of a Hopf monoid $M$ is a sub-species of $M$ stable under the product and co-product maps. 


A \emph{morphism of Hopf monoids} in vector species is a morphism of vector species which preserves the products, co-products (compatibility axiom) and the unity (unitality axiom).
\end{definition}

We will use the term Hopf monoid for connected Hopf monoid in vector species. A sub-monoid of a Hopf monoid $M$ is itself a Hopf monoid when equipped with the product and co-product maps of $M$. We consider this to be always the case.

A \emph{decomposition} of a finite set $I$ is a sequence of pairwise disjoint sets $S=(S_1,\dots,S_l)$ such that $I=\sqcup_{i=1}^lS_i$. A \emph{composition} of a finite set $I$ is a decomposition of $I$ without empty parts. We will note $S \vdash I$ for $S$ a decomposition of $I$, $S\vDash I$ if $S$ is a composition, $l(S) = l$ the length of a decomposition and $|S| = |I|$ the number of elements in the decomposition.

\begin{definition}
Let $M$ a be a Hopf monoid. The \emph{antipode} of M is the morphism of Hopf monoids $\text{S} : M\rightarrow M$ defined by: $$ \text{S}_I = \sum_{\substack{(S_1,\dots S_k)\vDash I\\ k\geq 1}} \mu_{S_1,\dots,S_k}\circ\Delta_{S_1,\dots S_k}$$ for any finite set $I$.
\end{definition}

\begin{definition}
A \emph{character} on a Hopf monoid $M$ is a collection of linear maps $\zeta_I:M[I]\rightarrow \Bbbk$ subject to the following axioms.
\begin{itemize}
\item \emph{Naturality}. For each bijection $\sigma: I\rightarrow J$ we have $\zeta_J\circ M[\sigma] = \zeta_I$.
\item \emph{Multiplicativity}. For each disjoint sets $S$, $T$ we have $\zeta_{S\sqcup T}\circ\mu_{S,T} = m \circ\zeta_S\otimes\zeta_T$, where $m$ is the canonical isomorphism $\Bbbk\otimes\Bbbk\cong\Bbbk$.
\item \emph{Unitality}. $\zeta_{\emptyset}(1) = 1$.
\end{itemize}
\end{definition}

Let us recall from \cite{AA} the results which we will use in the sequel.

\begin{theorem}[Proposition 16.1 and Proposition 16.2 in \cite{AA}]
\label{inv}
Let $M$ be a Hopf monoid and $\zeta$ a character on $M$. For $x\in M[I]$ and $n$ an integer we define:
\begin{displaymath}
\chi_I(x)(n)= \sum_{(S_1, \dots S_n) \vdash I} \zeta_{S_1}\otimes\dots\otimes\zeta_{S_n}\circ\Delta_{S_1,\dots S_n}(x).
\end{displaymath}
Then $\chi_I$ is a polynomial invariant in $n$ verifying:
\begin{itemize}
\item $\chi_I(x)(1) = \zeta(x)$,
\item $\chi_{\emptyset} = 1$ and $\chi_{S\sqcup T}(\mu(x\otimes y)) = \chi_S(x)\chi_T(y)$,
\item $\chi_I(x)(-n) = \chi_I(\text{S}_I(x))(n)$.
\end{itemize}
\end{theorem}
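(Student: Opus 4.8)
The plan is to prove the three assertions in order, all of which are essentially consequences of the Hopf monoid axioms combined with co-associativity of $\Delta$ and the multiplicativity of $\zeta$. Throughout, I write $\Delta_{S_1,\dots,S_k}$ for the iterated coproduct $M[I]\to M[S_1]\otimes\cdots\otimes M[S_k]$, which is well-defined by co-associativity, with the convention $\Delta_{(I)}=\id$ and $\Delta_{\emptyset,\dots,\emptyset}$ landing in $\Bbbk^{\otimes k}$ via the connectedness isomorphism.

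\smallskip

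First I would establish that $\chi_I(x)$ is genuinely a polynomial in $n$. Fix $x\in M[I]$. For a composition $(T_1,\dots,T_k)\vDash I$ (so $k\le |I|$), group the decompositions $(S_1,\dots,S_n)\vdash I$ of length $n$ according to the composition obtained by deleting the empty blocks: a decomposition contributing a given composition $(T_1,\dots,T_k)$ is a choice of which $k$ of the $n$ slots are nonempty, i.e. $\binom{n}{k}$ choices, and by naturality and unitality of $\zeta$ the empty slots each contribute a factor $\zeta_\emptyset(1)=1$. Hence
\begin{displaymath}
\chi_I(x)(n)=\sum_{k\ge 1}\binom{n}{k}\sum_{(T_1,\dots,T_k)\vDash I}\zeta_{T_1}\otimes\cdots\otimes\zeta_{T_k}\circ\Delta_{T_1,\dots,T_k}(x),
\end{displaymath}
a finite $\Bbbk$-linear combination of the polynomials $\binom{n}{k}$, hence a polynomial of degree $\le|I|$. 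Setting $n=1$ kills all terms with $k\ge 2$ and leaves the single composition $(I)$ with $\Delta_{(I)}=\id$, giving $\chi_I(x)(1)=\zeta_I(x)$.

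\smallskip

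Next, multiplicativity: $\chi_\emptyset=1$ is immediate from $M[\emptyset]=\Bbbk$ and unitality of $\zeta$. For $\chi_{S\sqcup T}(\mu_{S,T}(x\otimes y))=\chi_S(x)(n)\,\chi_T(y)(n)$, I would expand the left side over decompositions $(U_1,\dots,U_n)\vdash S\sqcup T$ and apply the compatibility axiom iteratively: $\Delta_{U_1,\dots,U_n}\circ\mu_{S,T}$ equals, up to the shuffle isomorphism $\tau$, the composite of $\mu_{U_i\cap S,\, U_i\cap T}$ over all $i$ applied to $\Delta_{S\cap U_1,\dots,S\cap U_n}(x)\otimes\Delta_{T\cap U_1,\dots,T\cap U_n}(y)$. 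Then multiplicativity of $\zeta$ turns each $\zeta_{U_i}\circ\mu_{U_i\cap S,U_i\cap T}$ into $\zeta_{U_i\cap S}\otimes\zeta_{U_i\cap T}$, and the sum over $(U_1,\dots,U_n)\vdash S\sqcup T$ factors as the product of the sum over $(A_1,\dots,A_n)\vdash S$ and the sum over $(B_1,\dots,B_n)\vdash T$, which is exactly $\chi_S(x)(n)\chi_T(y)(n)$. This bookkeeping is slightly delicate but routine.

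\smallskip

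Finally, the reciprocity identity $\chi_I(x)(-n)=\chi_I(\mathrm{S}_I(x))(n)$. Since both sides are polynomials in $n$, it suffices to prove it for all positive integers $n$ (this is where polynomiality, established above, is used). Substitute the definition of the antipode $\mathrm{S}_I=\sum_{(R_1,\dots,R_k)\vDash I}\mu_{R_1,\dots,R_k}\circ\Delta_{R_1,\dots,R_k}$ into $\chi_I(\mathrm{S}_I(x))(n)$, then use co-associativity to merge $\Delta_{R_1,\dots,R_k}$ with the subsequent coproducts, the compatibility axiom to push each $\mu_{R_j}$ past the $\Delta$'s (as in the multiplicativity step), and multiplicativity of $\zeta$ to collapse everything to sums of $\zeta$-values over decompositions; the combinatorial core is then a sign-reversing / inclusion-exclusion argument showing the resulting signed sum of lattice-path-like terms equals $\chi_I(x)$ evaluated at $-n$, which is the standard Möbius-type computation underlying Aguiar--Ardila's Proposition 16.2. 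I expect this last step --- controlling the cancellation in the double sum over compositions (from the antipode) and decompositions (from $\chi$) --- to be the main obstacle; the first two parts are essentially formal manipulations with the axioms, whereas here one must identify the surviving terms with the coefficients of $\chi_I(x)(-n)$, for instance by comparing with the binomial expansion $\binom{-n}{k}=(-1)^k\binom{n+k-1}{k}$ and matching it against the antipode's alternating structure.
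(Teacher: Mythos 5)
The paper itself gives no proof of this statement: it is quoted verbatim from Aguiar--Ardila (their Propositions~16.1 and~16.2), so the only meaningful comparison is with their argument. Your treatment of the first two bullets is correct and is essentially theirs: regrouping length-$n$ decompositions by the composition obtained after deleting empty blocks, and using $\zeta_\emptyset(1)=1$ for the empty slots, gives $\chi_I(x)(n)=\sum_{k\le |I|}\binom{n}{k}a_k(x)$ with $a_k(x)=\sum_{(T_1,\dots,T_k)\vDash I}\zeta_{T_1}\otimes\cdots\otimes\zeta_{T_k}\circ\Delta_{T_1,\dots,T_k}(x)$; this yields polynomiality of degree at most $|I|$, the value $\zeta_I(x)$ at $n=1$, and (via iterated compatibility plus multiplicativity of $\zeta$) the product rule. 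Those parts are fine.

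The genuine gap is in the third bullet. You correctly flag the cancellation in the double sum as ``the main obstacle,'' but you leave it as an expectation rather than an argument, and your framing ``since both sides are polynomials it suffices to prove it for positive $n$'' is circular as stated: for positive $n$ the quantity $\chi_I(x)(-n)$ is \emph{only} accessible through the polynomial expression $\sum_k\binom{-n}{k}a_k(x)$, and matching that against the expansion of $\chi_I(\mathrm{S}_I(x))(n)$ is precisely the computation you defer. The idea that makes this a short formal argument --- and the one used in \cite{AA} --- is the convolution group of characters: $\chi_I(x)(n)=\zeta^{*n}(x)$, where $\zeta^{*n}=\zeta^{\otimes n}\circ\Delta^{(n)}$; the identity $\zeta^{*m}*\zeta^{*n}=\zeta^{*(m+n)}$ holds for positive integers, hence as an identity of polynomials, hence for all integers, so $\chi_I(\cdot)(-n)$ is the convolution inverse of $\zeta^{*n}$; $\zeta^{*n}$ is again a character (because $\Bbbk$ is commutative), and the convolution inverse of any character $\varphi$ is $\varphi\circ\mathrm{S}$ (since $\varphi\circ(\mathrm{S}*\id)=\varphi\circ u\epsilon$ by multiplicativity of $\varphi$). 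Uniqueness of inverses then gives $\chi_I(x)(-n)=(\zeta^{*n})^{*-1}(x)=\zeta^{*n}(\mathrm{S}_I(x))=\chi_I(\mathrm{S}_I(x))(n)$ with no inclusion--exclusion needed. If you insist on the direct sign-reversing route, note also that the antipode formula as printed in the paper omits the Takeuchi sign $(-1)^k$; your sketch relies on ``the antipode's alternating structure,'' so that sign must be restored before any cancellation argument can close.
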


Let $M$ be a Hopf monoid. For $I$ a set and $x\in M[I]$ we call $x$ \emph{discrete} if $I=\{i_1,\dots, i_{|I|}\}$ and $x=\mu_{\{i_1\},\dots,\{i_{|I|}\}}x_1\otimes\dots\otimes x_{|I|}$ for $x_j\in M[\{i_j\}]$. Then the maps that send discrete elements onto 1 and other elements onto 0 give us a character of Hopf monoid. Following the terminology introduced in Section 17 of \cite{AA}, we call \emph{basic invariant of $M$} the polynomial invariant obtained by applying Theorem \ref{inv} with this character. We note $\chi^M$ this polynomial or just $\chi$ when $M$ is clear from the context.










\section{Basic invariant of hypergraphs}

In all the following, $I$ always denotes a finite set.

Our goal is to express the basic invariant of the Hopf monoid of hypergraphs defined in Section 20 of \cite{AA}. More specifically we intend to obtain a combinatorial interpretation of $\chi_I(x)(n)$ and $\chi_I(x)(-n)$.

In this context, an \emph{hypergraph over $I$} is a collection of (possibly repeated) subsets of $I$, which we call \emph{edges}\footnote{in some references, the terms \emph{hyperedge} or \emph{multiedge} is used.}, containing $\emptyset$ exactly once. The elements of $I$ are then called \emph{vertices} of $H$ and $HG[I]$ denotes the free vector space of hypergraphs over $I$. Note that two hypergraphs over different sets can never be equal, e.g $\{\{1,2,3\},\{2,3,4\}\}\in HG[[4]]$ is not the same as $\{\{1,2,3\},\{2,3,4\}\}\in HG[[4]\cup\{a,b\}]$. This is illustrated in Figure \ref{same-edges}

\begin{figure}[htbp]
\begin{center}
\includegraphics[scale=1.5]{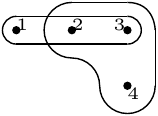}
\hspace{5cm}
\includegraphics[scale=1.5]{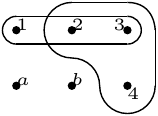}
\caption{Two hypergraphs with the same edges but over different sets.}
\label{same-edges}
\end{center}
\end{figure}

The product and co-product are given by, for $I=S\sqcup T$:
\begin{align*}
\mu_{S,T}: HG[S]\otimes HG[T] &\rightarrow HG[I] & \Delta_{S,T}: HG[I] &\rightarrow HG[S]\otimes HG[T] \\
H_1\otimes H_2 &\mapsto H_1\sqcup H_2 & H &\mapsto H_{|S}\otimes H_{/S}
\end{align*}
where $H_{|S} = \{e\in H\, |\, e\subseteq S\}$ is the \emph{restriction of $H$ to $S$} and $H_{/S} = \{e\cap T\, |\, e \nsubseteq S\}\cup\{\emptyset\}$ is the \emph{contraction of $S$ from $H$}. The discrete hypergraphs are then the hypergraphs with edges of cardinality at most 1.

\begin{example}
For $I=[5]$, $S=\{1,2,5\}$ and $T=\{3,4\}$, we have:
\begin{center}
\begin{tikzcd}[column sep=huge]
\raisebox{-21pt}{\includegraphics[scale=1.5]{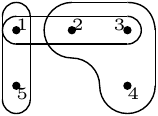}} \arrow[r, "\Delta_{S,T}"] & \raisebox{-10pt}{\includegraphics[scale=1.5]{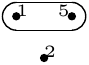}}\hspace{0.2cm} \otimes \hspace{0.2cm}\raisebox{-10pt}{\includegraphics[scale=1.5]{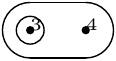}}
\end{tikzcd}
\end{center}
\end{example}


In \cite{AA}, Ardila and Aguiar propose a method to obtain a combinatorial interpretation of any polynomial invariant given by Theorem \ref{inv} on negative integers, assuming that we have an interpretation of it on positive integers. Their method consists in using a cancellation-free grouping-free formula for the antipode and the third point of Theorem \ref{inv}.
We use here a different approach: we express the polynomial dependency of $\chi_I(x)(n)$ in $n$ with a well chosen family of polynomials which we then use to calculate $\chi_I(x)(-n)$ and interpret the resulting formula.

Let us begin by giving a proposition giving us the aforementioned family of polynomials.
For $t\in\mathbb{N}^*$ and a sequence of positive integers $p_1, p_2,\dots, p_t$, we define $F_{p_1,\dots, p_t}$ as a function over the integers given by, for $n\in\mathbb{N}$:$$F_{p_1,\dots p_t}(n) = \sum_{0\leq k_1 <\dots < k_t\leq n-1} k_1^{p_1}\dots k_t^{p_t}.$$

\begin{proposition}
Let $p_1, p_2,\dots, p_t$ be integers. Then $F_{p_1,\dots p_t}$ is a polynomial of degree $\sum_{i=1}^t p_i + t$ whose coefficients can be expressed in term of sums and products of Bernoulli numbers (the explicit expression can be found in \cite{papier}).
\end{proposition}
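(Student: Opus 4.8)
The plan is to proceed by induction on $t$, peeling off the innermost summation variable and using the classical fact that a power sum $\sum_{k=0}^{m-1} k^p$ is a polynomial in $m$ of degree $p+1$ (the Faulhaber/Bernoulli formula). This handles the base case $t=1$ directly: $F_p(n) = \sum_{k=0}^{n-1} k^p$ is, by Faulhaber, a polynomial in $n$ of degree $p+1$ whose coefficients are explicit linear expressions in the Bernoulli numbers $B_j$, namely $F_p(n) = \frac{1}{p+1}\sum_{j=0}^{p}\binom{p+1}{j} B_j\, n^{p+1-j}$ (with the convention $B_1 = -\tfrac12$, say). So the degree is $p_1 + 1 = \sum p_i + t$ as claimed, and the coefficient claim holds.

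For the inductive step, I would write
\begin{displaymath}
F_{p_1,\dots,p_t}(n) = \sum_{k_t = t-1}^{n-1} k_t^{p_t} \Bigl(\sum_{0\le k_1 < \dots < k_{t-1} \le k_t - 1} k_1^{p_1}\cdots k_{t-1}^{p_{t-1}}\Bigr) = \sum_{k_t=0}^{n-1} k_t^{p_t}\, F_{p_1,\dots,p_{t-1}}(k_t),
\end{displaymath}
where the lower limit can be dropped because $F_{p_1,\dots,p_{t-1}}(k_t)$ vanishes for $k_t < t-1$ (empty strictly increasing chain of length $t-1$). By the induction hypothesis $F_{p_1,\dots,p_{t-1}}$ is a polynomial of degree $\sum_{i=1}^{t-1} p_i + (t-1)$ with Bernoulli-expressible coefficients, so $k_t^{p_t}\, F_{p_1,\dots,p_{t-1}}(k_t)$ is a polynomial in $k_t$ of degree $p_t + \sum_{i=1}^{t-1}p_i + (t-1) = \sum_{i=1}^{t} p_i + t - 1$. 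Applying the power-sum formula termwise (i.e. $\sum_{k=0}^{n-1} k^q = \frac{1}{q+1}\sum_{j}\binom{q+1}{j}B_j n^{q+1-j}$ to each monomial $k_t^q$) raises the degree by exactly one, giving degree $\sum_{i=1}^t p_i + t$, and again yields coefficients that are finite sums of products of Bernoulli numbers (the new coefficients being $\Bbbk$-linear combinations, with binomial coefficients, of products of an old coefficient with a single $B_j$). This proves the degree and the structure of the coefficients; to get the fully explicit closed form one simply unwinds the recursion $t$ times, which is the content deferred to \cite{papier}.

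The only genuinely delicate point is the bookkeeping on the \emph{degree}: one must check that no cancellation of leading terms occurs, i.e. that the leading coefficient of $F_{p_1,\dots,p_t}$ is nonzero. This follows because the power-sum operation $P(k)\mapsto \sum_{k=0}^{n-1}P(k)$ strictly increases degree (its leading coefficient is $\frac{1}{\deg P + 1}$ times that of $P$, up to sign conventions on $B_0 = 1$), and multiplication by $k_t^{p_t}$ preserves the leading coefficient up to shifting; since all terms $k_1^{p_1}\cdots k_t^{p_t}$ in the defining sum are nonnegative for $k_i \ge 0$, the leading coefficient is positive and in particular nonzero, so there is no degree drop. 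Everything else — naturality of "polynomial in $n$", validity of extending the sum to start at $0$, and the explicit Bernoulli expression — is routine once the recursion $F_{p_1,\dots,p_t}(n) = \sum_{k=0}^{n-1} k^{p_t} F_{p_1,\dots,p_{t-1}}(k)$ is in place.
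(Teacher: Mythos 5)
Your proof is correct. The extended abstract does not actually include a proof of this proposition (the explicit formula is deferred to the full version \cite{papier}), but your argument --- the recursion $F_{p_1,\dots,p_t}(n)=\sum_{k=0}^{n-1}k^{p_t}F_{p_1,\dots,p_{t-1}}(k)$ combined with Faulhaber's formula, plus the observation that summation multiplies the leading coefficient by $\tfrac{1}{\deg+1}$ so no degree drop can occur --- is the standard and almost certainly intended route, and all the steps (extending the lower limit to $0$, the Bernoulli structure of the coefficients) check out.
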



Before stating our results on $\chi_I(H)(n)$ we need to introduce some definitions.
There exists a canonical bijection between decompositions and functions with co-domain of the form $[n]$. In the sequel, we will want to seamlessly pass from one notion to the other. We hence give a few explanations on this bijection. Given an integer $n$, the canonical bijection between decompositions of $I$ of size $n$ and functions from $I$ to $[n]$ is given by:
\begin{align*}
b_{I,n}: \{f:I\rightarrow [n]\} &\rightarrow \{P\vdash I\,|\, l(P)=n\} \\
f &\mapsto (f^{-1}(1),\dots,f^{-1}(n)).
\end{align*}
If it is clear from the context what are $I$ and $n$ we will write $b$ instead of $b_{I,n}$. If $P$ is a partition we will also refer to $b^{-1}(P)$ by $P$ so that instead of writing "i such that $v\in P_i$" we can just write $P(v)$. Similarly, if $P$ is a function we will refer to $b(P)$ by $P$ so that $P_i = P^{-1}(i)$. Also remark that $b_{I,n}$ induces a bijection between compositions of $I$ of size $n$ and surjections from $I$ to $[n]$.

\begin{definition}
Let $H$ be a hypergraph over $I$ and $n$ be an integer. A \emph{coloring of $H$ with $[n]$} is a function from $I$ to $[n]$ (or a decomposition of $I$ of length $n$ from what precede) and in this context the elements of $[n]$ are called \emph{colors}.

Let $S\vdash I$ be a coloring of $H$. For $v\in e\in H$, we say that $v$ is a \emph{maximal vertex of $e$ (for $S$)} if $v$ is of maximal color in $e$ and we call \emph{maximal color of $e$ (for $S$)} the color of a maximal vertex of $e$. We say that a vertex $v$ is a \emph{maximal vertex (for $S$)} if it is a maximal vertex of an edge.

If $J\subset I$ is a subset of vertices, the \emph{order of appearance of $J$ (for $S$)} is the composition $\cano(S_{|J})$ where $S_{|J} = (S_1\cap J,\dots, S_{l(S)}\cap J)$ and where te map $\cano$ sends any decomposition on the composition obtained by dropping the empty parts.
\end{definition}

\begin{example}
\label{hgcolo}
We represent the coloring of a hypergraph on $I=\{a,b,c,d,e,f\}$ with $\{${\color{Green}1},{\color{Blue}2},{\color{Magenta}3},{\color{Red}4}$\}$:
\begin{center}
\includegraphics[scale=1.6]{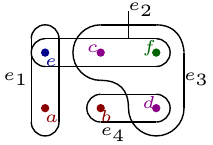}
\end{center}
The maximal vertex of $e_1$ is $a$ and the maximal vertices of $e_3$ are $c$ and $d$. The maximal color of $e_2$ is {\color{Magenta}{3}}. The order of appearance of $\{a,c,d,e\}$ is $(\{e\},\{c,d\},\{a\})$. The order of appearance of all edges is $(\{e_2,e_3\},\{e_1,e_4\})$.
\end{example}

\begin{definition}
Let $H$ be a hypergraph over $I$. An \emph{orientation} of $H$ is a function $f$ from $H$ to $I$ such that $f(e)\in e$ for every edge $e$. A \emph{directed cycle} in an orientation $f$ of $H$ is a sequence of distinct edges $e_1,\dots, e_k$ such that $f(e_1)\in e_2\setminus f(e_2),\dots, f(e_k)\in e_1\setminus f(e_1)$. An orientation is \emph{acyclic} if it does not have any cycle. We note $\mathcal{A}_H$ the set of acyclic orientation of $H$.

An orientation $f$ of $H$ and a coloring $S$ of $H$ with $[n]$ are said to be \emph{compatible} if $S(f(e)) = \max(S(e))$ for every $e\in H$. They are said to be \emph{strictly compatible} if $f(e)$ is the unique maximal vertex of $e$.
\end{definition}

We can now state our first theorem.

\begin{theorem}
\label{chi}
Let $I$ be a set and $H\in HG[I]$ a hypergraph over $I$. Then $\chi_I(H)(n)$ is the number of colorings of $H$ with $[n]$ such that every edge has only one maximal vertex. This is also the number of strictly compatible pairs of acyclic orientations and colorings with $[n]$.
Furthermore, defining $P_{H,f} = \{ P \vDash f(H)\, |\,  v\in e\setminus f(e) \Rightarrow P(v)< P(f(e))\}$, for every $f\in \mathcal{A}_H$, 
we have that $$\chi_I(H)(n) = n^{|J_H|}\sum_{f\in \mathcal{A}_H}\sum_{P\in P_{H,f}} F_{p_1,\dots, p_{l(P)}}(n),$$ where $J_H\subset I$ is the set of isolated vertices of $H$ (i.e vertices not in an edge) and for every $P\in P_{H,f}$, $p_i =|\tilde{P}_i|$ and $\tilde{P}_i = \left(\bigcup_{e\in f^{-1}(P_i)}e\right) \cap f(H)^c\bigcap_{j<i} \tilde{P}_j^c$.
\end{theorem}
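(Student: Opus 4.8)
The plan is to unfold $\chi_I(H)(n)$ through the iterated coproduct, read off which terms survive the basic character, reinterpret those terms first as colorings and then as strictly compatible pairs, and finally extract the polynomial formula by sorting the surviving colorings according to the order of appearance of the heads of the orientation. First I would iterate $\Delta_{S,T}(H)=H_{|S}\otimes H_{/S}$ to see that, for a decomposition $(S_1,\dots,S_n)\vdash I$ (equivalently a coloring, writing $S(v)=i$ for $v\in S_i$), the $i$-th tensor factor of $\Delta_{S_1,\dots,S_n}(H)$ is the hypergraph on $S_i$ whose non-empty edges are the sets $e\cap S_i$ for $e\in H$ with $e\subseteq S_1\sqcup\cdots\sqcup S_i$ but $e\not\subseteq S_1\sqcup\cdots\sqcup S_{i-1}$; these two conditions say exactly that $i$ is the maximal color of $e$, and then $e\cap S_i$ is its set of maximal vertices. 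Because the basic character $\zeta$ returns $1$ on a hypergraph precisely when all its edges have at most one element, the term $\zeta_{S_1}\otimes\cdots\otimes\zeta_{S_n}\circ\Delta_{S_1,\dots,S_n}(H)$ equals $1$ when every edge of $H$ has a single maximal vertex for this coloring and $0$ otherwise; summing over decompositions gives the first assertion.

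Next I would pass to orientations. Given a coloring with a unique maximal vertex in every edge, orienting each edge toward that vertex gives an orientation that is strictly compatible with it, and is clearly the only one; it is moreover acyclic, since along a directed cycle $e_1,\dots,e_k$ one would get $S(f(e_1))<S(f(e_2))<\cdots<S(f(e_k))<S(f(e_1))$ (each $f(e_j)$ lies in $e_{j+1}$ without being its maximal vertex, hence has strictly smaller color than $f(e_{j+1})$), a contradiction. Conversely a strictly compatible pair $(f,S)$ with $f$ acyclic has, by the very definition of strict compatibility, a unique maximal vertex in each edge. This bijection proves the second assertion.

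For the polynomial formula I would fix $f\in\mathcal{A}_H$ and count colorings $S$ with $(f,S)$ strictly compatible. The isolated vertices $J_H$ are unconstrained, accounting for the factor $n^{|J_H|}$; write $N=I\setminus(J_H\sqcup f(H))$ for the remaining vertices. Sort the relevant colorings by the composition $P:=\cano(S_{|f(H)})\vDash f(H)$. At a vertex $v\in f(H)$ lying in an edge $e$ with $v\neq f(e)$, strict compatibility forces $S(v)<S(f(e))$, hence $P(v)<P(f(e))$, so only $P\in P_{H,f}$ arises. Conversely, for fixed $P\in P_{H,f}$ of length $\ell=l(P)$, giving a coloring $S$ with $\cano(S_{|f(H)})=P$ amounts to choosing $1\le c_1<\cdots<c_\ell\le n$ (and setting $S\equiv c_j$ on $P_j$) together with colors for the vertices of $N$; strict compatibility then reads $S(v)<\min_{e\ni v}c_{P(f(e))}=c_{m_v}$ for each $v\in N$, where $m_v:=\min_{e\ni v}P(f(e))$. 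The number of such colorings is therefore
\begin{displaymath}
\sum_{1\le c_1<\cdots<c_\ell\le n}\ \prod_{v\in N}(c_{m_v}-1)\ =\ \sum_{0\le k_1<\cdots<k_\ell\le n-1}\ \prod_{j=1}^{\ell}k_j^{\,p_j}\ =\ F_{p_1,\dots,p_\ell}(n),
\end{displaymath}
where the middle equality uses the substitution $k_j=c_j-1$ and $p_j=\#\{v\in N:m_v=j\}$. A short induction on $i$ then identifies $\tilde P_i$ with $\{v\in N:m_v=i\}$, so $p_i=|\tilde P_i|$; summing over $P\in P_{H,f}$ and over $f\in\mathcal{A}_H$, and restoring the factor $n^{|J_H|}$, yields the claimed identity.

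The argument is mostly bookkeeping, and I expect the delicate part to be the last step: correctly handling vertices of $N$ lying in several edges (whence the minimum defining $m_v$), the change of variable $k_j=c_j-1$ that turns the sum into $F_{p_1,\dots,p_\ell}$, and the induction identifying $\tilde P_i$ with $\{v\in N:m_v=i\}$. One must also keep the decomposition/coloring dictionary and the single-maximal-vertex condition aligned throughout the first step, in particular checking that repeated edges create no difficulty.
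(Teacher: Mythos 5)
Your proposal is correct and follows essentially the same route as the paper's own (sketched) proof: unfold the iterated coproduct to characterize the surviving decompositions as colorings in which every edge has a unique maximal vertex, pass to strictly compatible pairs via the orientation sending each edge to its maximal vertex, and then, for fixed $f$ and fixed order of appearance $P$ of $f(H)$, count the colorings as $\sum_{0\le k_1<\cdots<k_{l(P)}\le n-1}\prod_j k_j^{|\tilde P_j|}=F_{p_1,\dots,p_{l(P)}}(n)$. You merely make explicit two points the paper leaves informal, namely the acyclicity of the induced orientation and the identification of $\tilde P_i$ with $\{v\,:\,m_v=i\}$ via the minimum over edges containing $v$.
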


\begin{proof}[Sketch of proof]
For $S$ a decomposition of $I$ of size $n$, note $H_1\otimes\dots\otimes H_n = \Delta_{S_1,\dots, S_n}(H)$. Let $S$ be a decomposition of $I$ of size $n$. Let $e$ be an edge. We then have the equivalence:
\begin{align*}
e\in H_i &\iff e\cap S_i \not = \emptyset \wedge\forall j>i, e\cap S_j = \emptyset \\
&\iff \text{$e\cap S_i$ is the set of maximal vertices of $e$}
\end{align*}
Hence, we have that 
\begin{align*}
\zeta_{S_1}\otimes\dots\otimes\zeta_{S_n}\circ\Delta_{S_1,\dots, S_n}(H) = 1 &\iff \forall e\in H, e\in H_i \Rightarrow |e\cap S_i| = 1\\
&\iff\text{each edge has only one maximal vertex.}
\end{align*}
The equivalence between the colorings such that every edge has only one maximal vertex and the strictly compatible pairs of acyclic orientations and colorings is given by the bijection $S\mapsto (e\mapsto v_e,S)$, where $v_e$ is the unique vertex in $e$ such that $S(v_e) = \max(S(e))$.

The term $n^{|J_H|}$ in the formula is trivially obtained, in the following we hence consider that $H$ has no isolated vertices.

Informally, for a hypergraph with no isolated vertex, the formula can be obtained by the following reasoning. To choose a coloring such that every edge has only one maximal vertex, one can proceed in the following:
\begin{enumerate}
\item choose the maximal vertex of each edge ($f\in \mathcal{A}_H$),
\item choose in which order those vertices appear ($P\in P_{H,f}$),
\item choose the color of those vertices ($k_1+1,\dots,k_{l(P)}+1$), (and notice that the set of such choices is empty if $l(P)> n$, which allow us to not add this non polynomial dependency in $n$ at the previous choice),
\item choose the colors of the yet uncolored vertices which are in the same edge than a vertex of minimal color in $f(H)$ ($k_1^{|\tilde{P}_1|}$); then those in the same edge than a vertex of second minimal color in $f(H)$ ($k_2^{|\tilde{P}_2|}$), etc.
\end{enumerate}

\end{proof}

\begin{example}
The coloring given in Example \ref{hgcolo} is not counted in $\chi_I(H)(4)$ since $e_3$ has two maximal vertices. However by changing the color of $d$ to {\color{Blue}2} we do obtain a coloring where every edge has only one maximal vertex.
\end{example}

We are now interested in the value of $(-1)^{|I|}\chi_I(H)(-n)$. We first state two lemmas.


\begin{lemma}
\label{f-n}
Let $p=(p_1,\dots, p_t)$ be a sequence of integers. Then $$F_p(-n) = (-1)^{d_t}\sum_{p\prec q}F_q(n+1)$$ where $p\prec q$ stands for $q$ coarsens $p$ as a composition of $\sum_i p_i$. 
\end{lemma}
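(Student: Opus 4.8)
The plan is to prove the identity by induction on $t$, after reducing it to a one-variable reciprocity for discrete sums together with a quasi-shuffle identity for the $F$'s. Throughout I write $\bar F_q(n):=F_q(n+1)=\sum_{1\le k_1<\dots<k_s\le n}k_1^{q_1}\cdots k_s^{q_s}$ (the terms with $k_1=0$ drop out since all parts are positive), so that the assertion becomes $F_p(-n)=(-1)^{d_t}\sum_{p\prec q}\bar F_q(n)$, where $d_t=t+\sum_{i=1}^tp_i$ is the degree of $F_p$ furnished by the preceding Proposition.

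The first ingredient is a one-variable fact: for any polynomial $g$, the polynomial $(\Sigma g)(n):=\sum_{m=0}^{n-1}g(m)$ satisfies $(\Sigma g)(-n)=-\sum_{m=1}^{n}g(-m)$. I would obtain this by telescoping the functional equation $(\Sigma g)(x+1)-(\Sigma g)(x)=g(x)$ from $0$ down to $-n$, starting from $(\Sigma g)(0)=0$. Grouping the tuples in $F_{p_1,\dots,p_t}(n)$ by the value of their largest index gives the recursion $F_{p_1,\dots,p_t}=\Sigma g$ with $g(m)=m^{p_t}F_{p_1,\dots,p_{t-1}}(m)$, and combining the two yields
\[
F_{p_1,\dots,p_t}(-n)=-\sum_{m=1}^{n}(-m)^{p_t}F_{p_1,\dots,p_{t-1}}(-m)=(-1)^{p_t+1}\sum_{m=1}^{n}m^{p_t}F_{p_1,\dots,p_{t-1}}(-m).
\]
For the base case $t=1$ I would substitute the classical closed form $F_{p_1}(n)=\frac{1}{p_1+1}\bigl(B_{p_1+1}(n)-B_{p_1+1}\bigr)$ in terms of Bernoulli polynomials, and apply the reflection formula $B_m(1-x)=(-1)^mB_m(x)$ together with the vanishing of Bernoulli numbers of odd index $\ge 3$; a short case distinction on the parity of $p_1$ then gives, uniformly, $F_{p_1}(-n)=(-1)^{p_1+1}\bar F_{p_1}(n)$, which is the claim since $(p_1)$ has no proper coarsening and $d_1=p_1+1$.

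For the inductive step I would feed the induction hypothesis for $F_{p_1,\dots,p_{t-1}}(-m)$ into the display above. Since $d_t=d_{t-1}+p_t+1$, the factors $(-1)^{p_t+1}$ and $(-1)^{d_{t-1}}$ combine into $(-1)^{d_t}$, leaving $(-1)^{d_t}\sum_{(p_1,\dots,p_{t-1})\prec q'}\sum_{m=1}^{n}m^{p_t}\bar F_{q'}(m)$. The key combinatorial identity is then
\[
\sum_{m=1}^{n}m^{p}\,\bar F_{q'_1,\dots,q'_s}(m)=\bar F_{q'_1,\dots,q'_s,\,p}(n)+\bar F_{q'_1,\dots,q'_{s-1},\,q'_s+p}(n),
\]
which I would prove by splitting the sum defining $\bar F_{q'}(m)$ according to whether the new largest index $m$ is strictly greater than the previous largest index $k_s$ (append a part $p$) or equal to it (add $p$ to the last part). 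It then remains to identify this with the poset of coarsenings: a coarsening $q$ of $(p_1,\dots,p_t)$ is the same datum as a coarsening $q'$ of $(p_1,\dots,p_{t-1})$ plus the choice of keeping the last bar of $(p_1,\dots,p_t)$ (yielding $q=(q',p_t)$) or erasing it (yielding $q=(q'_1,\dots,q'_{s-1},q'_s+p_t)$); comparing the two sums term by term closes the induction.

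The main obstacle will be this last matching step: one must verify carefully that the ``append versus merge'' alternative produced by the quasi-shuffle identity is exactly the recursive description of the coarsening poset of $(p_1,\dots,p_t)$, and that the signs recombine into $(-1)^{d_t}$ at each stage. Everything else is routine manipulation of finite sums; a fully detailed proof, together with the explicit expansion of the coefficients of $F_p$ in terms of Bernoulli numbers, is given in \cite{papier}.
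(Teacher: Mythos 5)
Your argument is correct, and since the extended abstract defers the proof of this lemma entirely to \cite{papier}, there is nothing in the paper itself to compare it against; taken on its own terms it is a complete and valid route. The three ingredients all check out: the telescoping identity $(\Sigma g)(-n)=-\sum_{m=1}^{n}g(-m)$ follows from $(\Sigma g)(0)=0$ and the difference equation; the recursion $F_{p_1,\dots,p_t}(n)=\sum_{m=0}^{n-1}m^{p_t}F_{p_1,\dots,p_{t-1}}(m)$ is immediate from grouping by the largest index; and the quasi-shuffle split (new maximal index strictly larger than $k_s$ versus equal to $k_s$) matches exactly the recursive description of coarsenings of $(p_1,\dots,p_t)$ by keeping or erasing the last bar, with the signs recombining via $d_t=d_{t-1}+p_t+1$. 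Two minor remarks. First, your base case via Bernoulli reflection is heavier than necessary: starting the induction at $t=0$ with $F_{()}\equiv 1$ lets the same inductive step produce $F_{p_1}(-n)=(-1)^{p_1+1}\sum_{m=1}^{n}m^{p_1}=(-1)^{p_1+1}F_{p_1}(n+1)$ directly, so no Bernoulli polynomials are needed anywhere. Second, the identification $\bar F_q(n)=F_q(n+1)$ (dropping the $k_1=0$ terms) and the degree count $d_t=t+\sum_ip_i$ both use that the parts $p_i$ are positive; the lemma says ``integers'' but the definition of $F$ in the paper assumes positive parts, so you should state that hypothesis explicitly where you invoke it.
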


\begin{definition}
Let $P = (P_1,\dots,P_l)\vDash I$ and $Q = (Q_1,\dots, Q_k) \vDash J$ be two compositions of two disjoint sets $I$ and $J$. The \emph{product} of $P$ and $Q$ is the composition $P\cdot Q = (P_1,\dots,P_l,Q_1,\dots Q_k)$. The \emph{shuffle product} of $P$ and $Q$ is the set $sh(P,Q) = \{R\vDash I\sqcup J\,|\, P=\cano (R_{|I}), Q=\cano(R_{|J})\}$.

Let $P'\vDash I$ be another composition of $I$. We say that $P'$ refines $P$ and note $P'\prec P$ if $P' = Q_1 \cdot\dots\cdot Q_l$ with $Q_i$ a composition of $P_i$.
\end{definition}

This next lemma is the crux of the following Theorem 3. Its (combinatorial, and -we hope- nice) proof may be found in [5].

\begin{lemma}
\label{compsum}
Let $I$ be a set and $P\vDash I$ a composition of $I$. We have the identity: $$\sum_{Q\prec P} (-1)^{l(Q)} = (-1)^{|P|}.$$

Let furthermore $G$ be a directed acyclic graph on $I$ and consider the \emph{constrained set} \linebreak[4] $C(G,P)=\{Q\prec P\,|\, \forall (v,v')\in G,Q(v)< Q(v')\}$. We have the more general identity: $$\sum_{Q\in C(G,P)} (-1)^{l(Q)} = \left\{\begin{array}{cl}
0  & \text{if there exists $(v,v')\in G$ such that $P(v')<P(v)$},    \\ 
(-1)^{|P|}  & \text{else.}\end{array}\right.$$
\end{lemma}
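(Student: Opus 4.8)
The plan is to prove the first (unconstrained) identity by a sign-reversing involution or, more transparently, by a generating-function/induction argument on $|P|$, and then to bootstrap the constrained identity from it. For the first identity, note that a refinement $Q \prec P$ is the data of a composition $Q_i \vDash P_i$ for each block, so $\sum_{Q \prec P}(-1)^{l(Q)} = \prod_{i=1}^{l(P)}\bigl(\sum_{Q_i \vDash P_i}(-1)^{l(Q_i)}\bigr)$. The number of compositions of a set of size $m$ into exactly $j$ nonempty ordered blocks is the surjection number $j!\,S(m,j)$, and the alternating sum $\sum_{j\ge 1}(-1)^j j!\,S(m,j)$ is a classical evaluation equal to $(-1)^m$ (it is $-\!\operatorname{Li}$-type, or simply the value at $-1$ of the ordered Bell polynomial; equivalently one checks $\sum_{j}(-1)^j j! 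S(m,j)=(-1)^m$ by the recurrence $S(m,j)=jS(m-1,j)+S(m-1,j-1)$). Multiplying over the $l(P)$ blocks gives $\prod_i (-1)^{|P_i|} = (-1)^{|I|} = (-1)^{|P|}$, as claimed. Alternatively, and perhaps cleaner for the paper, one can give a direct sign-reversing involution: fix the smallest element $m$ of $I$ in some chosen linear order; given $Q$, either split the block containing $m$ right after $m$ or merge that block with the next one — this toggles $l(Q)$ by $1$ and is an involution with no fixed points once $|I|\ge 1$... wait, that would give $0$, so the fixed points must be handled: the fixed points are exactly the $Q$ in which $m$ is alone and forms the last... no. The honest statement is that a naive "merge/split at the minimum" involution has fixed points precisely the singletons-everywhere composition and must be accounted for, which is why I would favour the generating-function computation above for the base case.

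For the constrained identity, I would argue as follows. Write $C(G,P) = \{Q \prec P : (v,v')\in G \Rightarrow Q(v) < Q(v')\}$. First dispose of the degenerate case: if some edge $(v,v')\in G$ has $P(v') < P(v)$, then $v,v'$ lie in distinct blocks $P_a, P_b$ with $b < a$; every refinement $Q$ of $P$ keeps all of $P_b$ strictly before all of $P_a$, so $Q(v') < Q(v)$ always, whence no $Q$ satisfies the constraint for this edge, $C(G,P) = \emptyset$, and the sum is $0$. For the non-degenerate case, every edge $(v,v')$ of $G$ has $P(v) \le P(v')$, i.e. $v$ in an earlier-or-equal block than $v'$; edges between distinct blocks impose no constraint on refinements of $P$ (they are automatically satisfied), so only the "internal" edges — those with $P(v) = P(v')$, lying inside a single block $P_i$ — matter. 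Thus $C(G,P) = \prod_{i}\{Q_i \vDash P_i : (v,v')\in G|_{P_i} \Rightarrow Q_i(v) < Q_i(v')\}$ and the sum factorizes as $\prod_i\bigl(\sum (-1)^{l(Q_i)}\bigr)$ over compositions of the block $P_i$ compatible with the restricted DAG $G|_{P_i}$. So it suffices to prove: for a DAG $G$ on a set $I$ (with no further block structure), $\sum_{Q \vDash I,\ \text{$G$-compatible}} (-1)^{l(Q)} = (-1)^{|I|}$.

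This reduced claim I would prove by a sign-reversing involution that respects the DAG, using a topological order. Pick a topological order of $G$ and let $m$ be the $G$-minimal vertex that is smallest in this order (a source). In a $G$-compatible composition $Q = (Q_1,\dots,Q_l)$, the source-ness of $m$ does not by itself pin its block. Instead, I would induct on $|I|$: condition on $Q_1$, the first block, which must be an antichain "down-set-compatible" subset — precisely, $Q_1$ can be any nonempty subset $A \subseteq I$ such that $A$ is a union of sources / more precisely such that no edge points into $A$ from outside $A$ and $G|_A$ has no edges at all... actually the condition is just: $A$ is such that $G$ has no edge from $I\setminus A$ into $A$ and $A$ contains no edge, i.e. $A$ is an independent "initial" set. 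Then $\sum_{Q}(-1)^{l(Q)} = \sum_{A}(-1)\cdot\bigl(\sum_{Q' \vDash I\setminus A,\ G|_{I\setminus A}\text{-compat}}(-1)^{l(Q')}\bigr) = -\sum_A (-1)^{|I\setminus A|}$ by induction, $= -(-1)^{|I|}\sum_A (-1)^{-|A|} = (-1)^{|I|}\cdot\bigl(-\sum_A(-1)^{|A|}\bigr)$; so I need $\sum_A (-1)^{|A|} = -1$ over all nonempty independent initial sets $A$ of $G$. Including $A = \emptyset$ this is $\sum_{A \text{ indep.\ initial}}(-1)^{|A|} = 0$, which holds because the indicator of "independent initial" is multiplicative-ish... hmm, this last Euler-characteristic claim is exactly the place I expect to sweat: it should follow from the fact that a minimal source $m$ is in every maximal such $A$ or can be toggled — i.e. pairing $A \leftrightarrow A \triangle \{m\}$ where $m$ is the topologically-first source — but one must check $A\triangle\{m\}$ stays independent and initial, which fails if $m$ has a neighbour already in $A$; since $m$ is a source and is topologically first, any edge at $m$ points away from $m$ to a later vertex, and if that vertex is in $A$ then $A$ already contained an edge, contradiction — so the toggle is well-defined and fixed-point-free, giving the vanishing. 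That toggle argument, properly stated, simultaneously handles the whole reduced claim without the induction, and I expect the main obstacle to be stating it cleanly (which vertex to toggle, and verifying "independent + initial" is preserved); the paper defers this to reference [5], and in the extended abstract I would present exactly this toggle-at-the-topologically-first-source involution as the proof, remarking that the first identity is the special case $G = \emptyset$.
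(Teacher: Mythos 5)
The paper itself gives no proof of this lemma in the extended abstract (it explicitly defers it to the full version, reference [5]), so there is no in-text argument to compare against; I can only assess your proposal on its own terms, and it is essentially correct and complete once the hedging is stripped out. Your chain of reductions is sound: (i) the unconstrained identity factors over the blocks of $P$ and reduces to $\sum_j (-1)^j j!\,S(m,j) = (-1)^m$, which follows in one line from $x^m = \sum_j S(m,j)\,x(x-1)\cdots(x-j+1)$ at $x=-1$; (ii) if some edge has $P(v')<P(v)$ then every refinement preserves that strict inequality, so $C(G,P)=\emptyset$; (iii) otherwise cross-block edges are automatically satisfied and the sum factors into one constrained sum per block, reducing everything to: for a DAG $G$ on a nonempty set $I$, $\sum_{Q}(-1)^{l(Q)} = (-1)^{|I|}$ over $G$-compatible compositions; (iv) conditioning on the first block $Q_1=A$, which must be a nonempty subset of the (nonempty, since $G$ is a finite DAG) set $S_0$ of sources, and applying induction to $G|_{I\setminus A}$ gives $-(-1)^{|I|}\sum_{\emptyset\neq A\subseteq S_0}(-1)^{|A|} = -(-1)^{|I|}\cdot(-1) = (-1)^{|I|}$. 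Two presentational points: the abandoned merge/split involution in your first paragraph should simply be deleted (the surjection-number computation is anyway subsumed by step (iv) with $G=\emptyset$, as you note at the end); and the digression about ``independent initial sets'' and the toggle $A\leftrightarrow A\,\triangle\,\{m\}$ is unnecessary once you observe that the admissible first blocks are exactly the nonempty subsets of $S_0$, for which $\sum_{\emptyset\neq A\subseteq S_0}(-1)^{|A|} = (1-1)^{|S_0|}-1 = -1$ is immediate. The closing claim that the toggle ``handles the whole reduced claim without the induction'' is not substantiated as written, but nothing in your proof depends on it.
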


We can now state the main result of this section:

\begin{theorem}[Reciprocity theorem on hypergraphs]
\label{chi-n}
Let $I$ be a set and $H\in HG[I]$ a hypergraph over $I$. Then $(-1)^{|I|}\chi_I(H)(-n)$ is the number of compatible pairs of acyclic orientations and colorings with $[n]$ of $H$. In particular, $(-1)^{|I|}\chi_I(H)(-1) = |\mathcal{A}_H|$ is the number of acyclic orientations of $H$.
\end{theorem}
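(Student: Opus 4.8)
The strategy is to start from the polynomial formula for $\chi_I(H)(n)$ given in Theorem \ref{chi}, substitute $-n$, and massage the resulting expression using Lemma \ref{f-n} and Lemma \ref{compsum} until it becomes a manifestly combinatorial count. Concretely, I would first dispose of the isolated vertices: since $\chi_I(H)(n) = n^{|J_H|}\,\chi_{I\setminus J_H}(H)(n)$, the factor $(-n)^{|J_H|}$ contributes a sign $(-1)^{|J_H|}$ and a factor counting the $n^{|J_H|}$ ways to color the isolated vertices, which is exactly their contribution to ``compatible pairs'', so I may assume $H$ has no isolated vertex. Then, writing $$(-1)^{|I|}\chi_I(H)(-n) = (-1)^{|I|}\sum_{f\in\mathcal{A}_H}\sum_{P\in P_{H,f}} F_{p_1,\dots,p_{l(P)}}(-n),$$ I would apply Lemma \ref{f-n} to each $F_{p_1,\dots,p_t}(-n)$, turning it into a signed sum over coarsenings $q$ of the composition $(p_1,\dots,p_t)$ of the evaluation $F_q(n+1)$. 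The sign $(-1)^{d_t}$ from Lemma \ref{f-n} (where $d_t=\sum p_i + t$, the degree) must be reconciled with the global $(-1)^{|I|}$; here one uses that $\sum_i p_i = |I| - |f(H)|$ and $t = l(P) = $ number of parts of the composition of $f(H)$, so $(-1)^{|I|}(-1)^{d_t} = (-1)^{|f(H)|}(-1)^{l(P)}$, isolating a $(-1)^{l(P)}$ that is ripe for Lemma \ref{compsum}.

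Next I would reinterpret the coarsenings $q$ of $(p_1,\dots,p_t)$ together with the evaluations $F_q(n+1) = \sum_{0\le k_1<\dots<k_s\le n} k_1^{q_1}\cdots k_s^{q_s}$ combinatorially: a term $k_j^{q_j}$ with $k_j\in\{0,\dots,n\}$ counts the ways to assign one of $k_j$... wait, rather one of the colors $\{1,\dots,n\}$ strictly below a threshold — more precisely I would expand $F_q(n+1)$ so that choosing $0\le k_1<\dots<k_s\le n$ selects the distinct color-levels occupied by the maximal vertices, and each power $k_j^{q_j}$ counts colorings of the $q_j$ non-maximal vertices that sit in edges whose maximal color is the $j$-th such level, subject to being $\le k_j$ (i.e. $<$ that maximal color, \emph{or equal} — which is precisely the relaxation from \emph{strict} to \emph{plain} compatibility). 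The coarsening operation $q$ on the composition $(p_1,\dots,p_t)$ exactly records which blocks $\tilde P_i$ of non-maximal vertices get merged because their governing maximal colors coincide. Thus, after swapping the order of summation to sum first over a fixed coloring (equivalently a fixed decomposition of $I$) and a fixed orientation, the inner sum becomes $\sum (-1)^{l(Q)}$ over compositions $Q$ refining some target composition and satisfying exactly the constraints ``$Q(v)<Q(v')$ whenever an oriented edge sends $v'$ to its head $v$'' — i.e. the constrained set $C(G,P)$ of Lemma \ref{compsum}, with $G$ the directed acyclic graph obtained from the orientation $f$ by drawing an arc from each non-head vertex of an edge to the head.

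Finally I would invoke Lemma \ref{compsum}: the alternating sum $\sum_{Q\in C(G,P)}(-1)^{l(Q)}$ is $0$ unless the coloring already respects all the arc-constraints of $G$ (i.e. the coloring and orientation $f$ form a \emph{compatible} pair in the sense that the head $f(e)$ of every edge carries the maximal color of $e$), in which case it equals $(-1)^{|P|}$; combined with the residual sign this yields $+1$ per compatible $(f,S)$ pair. Summing over all $f\in\mathcal{A}_H$ and all colorings $S$ therefore produces exactly the number of compatible pairs of acyclic orientations and colorings with $[n]$, which is the claim; setting $n=1$ forces every edge's vertices to share the unique color, so every acyclic orientation is compatible with the unique coloring, giving $|\mathcal{A}_H|$. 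The main obstacle I anticipate is the bookkeeping in the middle step: correctly tracking how the index shift $n\mapsto n+1$ in $F_q(n+1)$, the strict inequalities $k_1<\dots<k_s$, and the coarsening $q$ together conspire to relax ``unique maximal vertex'' to ``the oriented head attains the max'' while keeping the correspondence between coarsenings and the refinement poset of Lemma \ref{compsum} exact — in particular making sure the orientation $f$ really does induce an acyclic graph $G$ (which it does, since $f$ is acyclic) so that Lemma \ref{compsum} applies.
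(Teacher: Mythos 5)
Your proposal is correct and follows essentially the same route as the paper's proof: substitute $-n$ into the formula of Theorem \ref{chi}, apply Lemma \ref{f-n} to convert each $F_{p_1,\dots,p_t}(-n)$ into a signed sum of $F_q(n+1)$ over coarsenings, perform the same sign bookkeeping via $\sum_i p_i = |I\setminus J_H| - |f(H)|$, swap the order of summation, and collapse the inner alternating sum with Lemma \ref{compsum} applied to the DAG $G=\{(v,f(e))\mid v\in e\setminus f(e)\}$, leaving a formula in $F_q(n+1)$ that is reinterpreted (as in Theorem \ref{chi}, but with weak inequalities) as counting compatible rather than strictly compatible pairs.
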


\begin{proof}[Sketch of proof]
From Proposition \ref{chi} and Lemma \ref{f-n} we have that $$\chi_I(H)(-n) = (-n)^{|J_H|}\sum_{f\in \mathcal{A}_H}\sum_{P\in P_{H,f}}(-1)^{\sum_{i=1}^{l(P)}p_i + l(P)}\sum_{(p_1, \dots, p_{l(P)}) \prec q}F_q(n+1).$$
Remark that:
\begin{itemize}
\item $\sum_{i=1}^{l(P)}p_i = |I\setminus J_H| -|f(H)|$ (since $(\tilde{P}_1,\dots \tilde{P}_{l(P)}, f(H))$ is a partition of $I\setminus J_H$),
\item $\phi :\{Q\vDash f(H)\,|\, P\prec Q\} \rightarrow \{\text{$q$ composition of $|I\setminus J_H| -|f(H)|$}\,|\, (p_1,\dots, p_{l(P)}) \prec q\}$ $Q\mapsto (|\tilde{Q}_1|,\dots, |\tilde{Q}_{l(Q)}|)$ is a bijection.
\end{itemize}
This gives us:
\begin{align*}
(-1)^{|I|}\chi_I(H)(-n) &= n^{|J_H|}\sum_{f\in \mathcal{A}_H}(-1)^{|f(H)|}\sum_{P\in P_{H,f}}(-1)^{l(P)}\sum_{P\prec Q} F_{\phi(Q)}(n+1)\\
&= n^{|J_H|}\sum_{f\in \mathcal{A}_H}(-1)^{|f(H)|}\sum_{Q\vDash f(H)}\left(\sum_{\substack{P\prec Q\\P\in P_{H,f}}} (-1)^{l(P)}\right)F_{\phi(Q)}(n+1).
\end{align*}
By definition of $\mathcal{A}_H$, $G = \{(v,f(e))\,|\, v\in e\setminus f(e)\}$ is a directed acyclic graph on $f(H)$. Hence, remarking that $\{ P\prec Q\,|\, P\in P_{H,f}\} = C(G,Q)$, Lemma \ref{compsum} leads to:
\begin{align*}
(-1)^{|I|}\chi_I(H)(-n) &= n^{|J_H|}\sum_{f\in \mathcal{A}_H}(-1)^{|f(H)|}\sum_{\substack{P\vDash f(H)\\ P(v)\leq P(v') \forall(v,v')\in G}} (-1)^{|f(H)|}F_{\phi(P)}(n+1) \\
&= n^{|J_H|}\sum_{f\in \mathcal{A}_H}\sum_{P\in P_{H,f}'} F_{\phi(P)}(n+1) \\
&= n^{|J_H|}\sum_{f\in \mathcal{A}_H}\sum_{P\in P_{H,f}'}F_{p_1,\dots, p_{l(P)}}(n+1),
\end{align*}
where $P_{H,f}' = \{ P\vDash f(H)\,|\, P(v\in e\setminus f(e) \leq P(f(e)\}$.

The end of the proof is analogous to the proof of Theorem \ref{chi}.
\end{proof}

\begin{example}
For any $I$ and any $H\in HG[I]$, we have $\chi_I(H)(n) \leq (-1)^{|I|}\chi_I(H)(-n)$. This comes from the fact that any strictly compatible pair is compatible.

The coloring given in Example \ref{hgcolo} has two compatible acyclic orientations: both send $e_1$ on $a$, $e_2$ on $c$ and $e_4$ on $b$ but one sends $e_3$ on $c$ and the other $e_3$ on $d$.

For the color set $\{${\color{Blue}1},{\color{Red}2}$\}$, the following coloring has 4 compatible orientations but only two are acyclic.
\begin{center}
\includegraphics[scale=1.6]{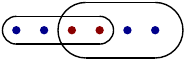}
\end{center}
\end{example}

\section{Application to other Hopf monoids}

In this section we use Theorem \ref{chi} and Theorem \ref{chi-n} to obtain a combinatorial interpretation of the basic invariants for some Hopf monoids presented in Sections 20 to 25 of \cite{AA}. The results presented in the subsections 4.2 to 4.4 already appear, at least implicitly, in previous works (details are provided at the beginning of each these subsections).



In all the following, we denote by $\chi$ the basic invariant of the Hopf monoid of hypergraphs.

\subsection{Simple hypergraphs}
\label{sshg}
A hypergraph is \emph{simple} if it has no repeated edges. The vector species $SHG$ of simple hypergraphs is not stable by the contraction defined on hypergraphs but it still admits a Hopf monoid structure. The product and co-product are given by, for $I=S\sqcup T$:
\begin{align*}
\mu_{S,T}: SHG[S]\otimes SHG[T] &\rightarrow SHG[I] & \Delta_{S,T}: SHG[I] &\rightarrow SHG[S]\otimes SHG[T] \\
H_1\otimes H_2 &\mapsto H_1\sqcup H_2 & H &\mapsto H_{|S}\otimes H_{/S},
\end{align*}
where $H_{|S} = \{e\in H\, |\, e\subseteq S\}$ and $H_{/S} = \{e\cap T\, |\, e \nsubseteq S\}\cup\{\emptyset\}$ but this time without repetition, i.e $H_{/S}$ can also be defined as $\{B\subseteq\,|\, \exists A\subseteq S, A\sqcup B \in H\}$. A discrete simple hypergraph is then a simple hypergraph with edges of cardinality at most one.

\begin{proposition}
\label{shg}
$\chi^{SHG}$ is the restriction of $\chi$ to the vector species of simple hypergraphs.
\end{proposition}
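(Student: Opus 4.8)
The plan is to show that the inclusion $SHG \hookrightarrow HG$ is a morphism of Hopf monoids that sends the basic character of $SHG$ to (the restriction of) the basic character of $HG$; once this is established, Theorem~\ref{inv} produces the two invariants by the same universal formula, so they must agree on $SHG[I]$. Concretely, let $\iota: SHG \to HG$ be the collection of linear maps $\iota_I: SHG[I] \to HG[I]$ sending a simple hypergraph (viewed as a set of distinct subsets of $I$, together with $\emptyset$) to itself, now regarded as a multiset of edges. First I would check that $\iota$ is natural and compatible with products: naturality is immediate since relabelling commutes with $\iota$, and $\iota_{S\sqcup T}(\mu^{SHG}_{S,T}(H_1\otimes H_2)) = \iota(H_1\sqcup H_2)$, which equals $\mu^{HG}_{S,T}(\iota H_1 \otimes \iota H_2)$ because the restriction parts $H_1 \subseteq 2^S$ and $H_2 \subseteq 2^T$ are automatically disjoint multisets, so no spurious repetitions are created.

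The one genuinely delicate point -- and the step I expect to be the main obstacle -- is the coproduct. The issue is precisely the discrepancy flagged in the text: in $SHG$ one sets $H_{/S} = \{B \mid \exists A\subseteq S,\ A\sqcup B \in H\}$ \emph{without} repetition, whereas in $HG$ one takes $H_{/S} = \{e\cap T \mid e \nsubseteq S\}$ \emph{with} multiplicity. So in general $\iota_S \otimes \iota_T \circ \Delta^{SHG}_{S,T} \neq \Delta^{HG}_{S,T} \circ \iota_{S\sqcup T}$, and $\iota$ is \emph{not} a morphism of Hopf monoids in the strict sense. I would therefore not try to prove that; instead I would argue directly on the invariant. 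Fix $H \in SHG[I]$ and a decomposition $(S_1,\dots,S_n)\vdash I$. Writing $\Delta^{HG}_{S_1,\dots,S_n}(H) = H_1\otimes\cdots\otimes H_n$ and $\Delta^{SHG}_{S_1,\dots,S_n}(H) = H_1'\otimes\cdots\otimes H_n'$, I claim the corresponding summands in the two $\chi$-formulas coincide. The basic character $\zeta$ of $HG$ evaluates to $1$ on a tensor factor iff every edge of that factor has cardinality $\le 1$; and by the computation in the proof of Theorem~\ref{chi}, $e \in H_i$ iff $e\cap S_i$ is the set of maximal vertices of $e$ (for the coloring $(S_1,\dots,S_n)$), with the same edge $e$ contributing to exactly one factor. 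One checks that $H_i'$ is obtained from $H_i$ by discarding repeated edges, and -- crucially -- discarding repetitions never changes whether all edges have cardinality $\le 1$. Hence $\zeta^{HG}_{S_1}\otimes\cdots\otimes\zeta^{HG}_{S_n}(\Delta^{HG}(H)) = \zeta^{SHG}_{S_1}\otimes\cdots\otimes\zeta^{SHG}_{S_n}(\Delta^{SHG}(H))$ for every decomposition.

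Summing over all decompositions of $I$ of length $n$ then gives $\chi^{HG}_I(\iota_I H)(n) = \chi^{SHG}_I(H)(n)$ for all $n$, and since both sides are polynomials in $n$ agreeing at all positive integers, they are equal as polynomials; this is exactly the assertion that $\chi^{SHG}$ is the restriction of $\chi$ to $SHG$. The only thing requiring real care is the claim ``$H_i'$ is $H_i$ with repetitions removed'': one must track, for a chain of contractions and restrictions, that an edge $e' \subseteq S_i$ of $H_i'$ arises from some $e \in H$ with $e\cap S_j = \emptyset$ for $j > i$ and $e\cap S_i = e'$ (possibly with several such $e$, collapsing to one copy in the simple case but to several in the multiset case), and that no \emph{new} low-cardinality edge can appear or disappear under this collapsing. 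Once that bookkeeping lemma is in hand, the rest is the formal argument above. I would keep the bookkeeping short by noting it is the same decomposition-of-edges analysis already used (over a single block) in the sketch of Theorem~\ref{chi}, iterated over the chain $S_1,\dots,S_n$, and would refer to \cite{papier} for the fully detailed verification.
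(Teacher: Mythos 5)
Your argument is correct, and since the extended abstract defers the proof of Proposition~\ref{shg} to \cite{papier} there is no in-paper proof to compare against; but the two key observations you isolate are exactly the right ones: (i) for a simple $H$ and any decomposition $(S_1,\dots,S_n)$, the $i$-th tensor factor of $\Delta^{SHG}_{S_1,\dots,S_n}(H)$ is the $i$-th factor of $\Delta^{HG}_{S_1,\dots,S_n}(H)$ with multiplicities forgotten (both consist of the sets $e\cap S_i$ for the edges $e$ with $e\cap S_i\neq\emptyset$ and $e\cap S_j=\emptyset$ for $j>i$), and (ii) the basic character, which tests whether all edges have cardinality at most $1$, is blind to multiplicities. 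You are also right that the naive route fails: the inclusion $SHG\hookrightarrow HG$ is \emph{not} a coalgebra morphism, since distinct edges of a simple hypergraph can contract to equal edges. One remark that would let you skip the term-by-term bookkeeping entirely: the map in the \emph{other} direction, $HG\to SHG$, sending a hypergraph to its underlying simple hypergraph, \emph{is} a morphism of Hopf monoids (forgetting multiplicities commutes with both restriction and contraction) and it intertwines the two basic characters; since the construction of Theorem~\ref{inv} is functorial for character-preserving morphisms, $\chi^{HG}_I(H)=\chi^{SHG}_I(\overline{H})$ for every $H$, and restricting to simple $H$ gives the proposition in one line.
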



\subsection{Graphs}
\label{graphs}
The result of this subsection has already been given in Section 18 of \cite{AA}, but we give it here as a consequence of our result in the previous section. 

A \emph{graph} can be seen as a hypergraph whose edges are all of cardinality 2. The vector species $G$ of graphs is not stable by the contraction defined on hypergraphs, but it still admits a Hopf monoid structure. The product and co-product are given by, for $I=S\sqcup T$:
\begin{align*}
\mu_{S,T}: G[S]\otimes G[T] &\rightarrow G[I] & \Delta_{S,T}: G[I] &\rightarrow G[S]\otimes G[T] \\
g_1\otimes g_2 &\mapsto g_1\sqcup g_2 & g &\mapsto g_{|S}\otimes g_{/S},
\end{align*}
where $g_{|S}$ is the sub-graph of $g$ induced by $S$ and $g_{/S} = g_{|T}$. A discrete graph is then a graph with no edges.

A \emph{proper coloring} of a graph is a coloring such that no edge has its two vertices of the same color. The chromatic polynomial of a graph is the polynomial $T$ such that $T(n)$ is the number of proper colorings with $n$ colors.

\begin{corollary}
The basic invariant of $G$ is the chromatic polynomial.
\end{corollary}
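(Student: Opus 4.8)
The plan is to derive the corollary directly from Theorem~\ref{chi} by specializing to hypergraphs all of whose edges have cardinality~$2$, i.e.\ to graphs. First I would observe that, under the identification of a graph $g$ with such a hypergraph $H$, a coloring of $H$ with $[n]$ is exactly a coloring of $g$ with $n$ colors, and the condition appearing in Theorem~\ref{chi}---``every edge has only one maximal vertex''---translates, for an edge $e=\{u,v\}$ of cardinality $2$, into the requirement that $u$ and $v$ do not receive the same color (since the two endpoints have a unique maximum under $S$ precisely when $S(u)\neq S(v)$). Hence the colorings counted by $\chi_I(H)(n)$ are exactly the proper colorings of $g$ with $n$ colors, so $\chi_I(H)(n)$ equals the chromatic polynomial $T(n)$ of $g$ evaluated at $n$.

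Next I would note that the basic invariant $\chi^G$ of the Hopf monoid of graphs is, by definition (it is obtained by applying Theorem~\ref{inv} with the character sending discrete elements to $1$ and everything else to $0$), computed by the same formula as $\chi_I$, once one checks that the relevant structure maps agree. Concretely, the coproduct $\Delta_{S,T}$ on $G$ sends $g$ to $g_{|S}\otimes g_{/S}$ with $g_{/S}=g_{|T}$, and this is exactly the restriction/contraction coproduct on hypergraphs applied to the hypergraph associated to $g$: for a graph, an edge $e$ with $e\not\subseteq S$ satisfies $e\cap T\neq\emptyset$, and since $|e|=2$, either $e\subseteq T$ (and $e\cap T=e$ survives in $g_{|T}$) or $|e\cap T|=1$ (and $e\cap T$ is a singleton, which is discarded anyway when we only care about the character value). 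Moreover the discrete graphs (no edges) correspond precisely to the discrete hypergraphs with edges of cardinality $\le 1$ in the image. So the basic character of $G$ is the pullback of the basic character of $HG$, and Proposition~\ref{shg}-type reasoning (the same argument used for simple hypergraphs) shows $\chi^G$ is the restriction of $\chi$ to the sub-species of graphs. Combining the two observations yields $\chi^G_I(g)(n)=T(n)$ for all $n\in\mathbb{N}$, and since both sides are polynomials agreeing on infinitely many points, they are equal as polynomials.

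The main obstacle I anticipate is the bookkeeping around the contraction: $G$ is \emph{not} a sub-monoid of $HG$ (as the paper explicitly warns, since the contraction of a graph is not a graph---singletons and empty edges appear), so one cannot simply invoke ``restriction of a Hopf monoid morphism.'' The care needed is to verify that, although the coproducts of $G$ and $HG$ differ on the nose, they agree \emph{after composing with the basic character}: the extra low-cardinality edges produced by hypergraph contraction are exactly the ones that make a tensor factor non-discrete only when they are genuine singletons, and one must check these never affect whether $\zeta_{S_1}\otimes\cdots\otimes\zeta_{S_n}\circ\Delta_{S_1,\dots,S_n}$ returns $1$ versus $0$. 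The cleanest way to package this is to work directly with the combinatorial description: prove that $\chi^G_I(g)(n)$ counts colorings of $g$ in which each edge is non-monochromatic, exactly as in the proof sketch of Theorem~\ref{chi} but with the simpler edge-cardinality-$2$ analysis, and identify this count with $T(n)$.

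Finally, for completeness I would remark that this also recovers the classical reciprocity of Stanley: applying Theorem~\ref{chi-n} to a graph, $(-1)^{|I|}\chi^G_I(g)(-n)$ counts compatible pairs of acyclic orientations and colorings, which for a graph is the number of pairs $(\text{acyclic orientation},\,\text{compatible }n\text{-coloring})$, and in particular at $n=1$ gives the number of acyclic orientations of $g$---matching $(-1)^{|I|}T(-1)$. This is a free consequence and need not be part of the proof of the corollary itself, but it is worth a one-line \emph{remark}.
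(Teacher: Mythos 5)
Your proposal is correct and follows the route the paper intends: specialize Theorem~\ref{chi} to edges of cardinality $2$, where ``only one maximal vertex'' is exactly ``the two endpoints get different colors,'' so the count is the number of proper colorings. Your extra care in checking that $\chi^G$ agrees with the restriction of $\chi$ even though $G$ is not a sub-monoid of $HG$ (the contraction coproducts differ only by singleton edges, which never change whether a tensor factor is discrete) is exactly the verification needed and is handled correctly.
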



In particular, by evaluating $\chi$ on negative integers for a graph, we recover the classical reciprocity theorem of Stanley \cite{stan}.

\subsection{Simplicial complexes}
In \cite{sc} Benedetti, Hallam, and Machacek constructed a combinatorial Hopf algebra of simplicial complexes and in particular they obtained results which generalise those given in this subsection. 

An \emph{abstract simplicial complex}, or simplicial complex, on $I$ is a collection $C$ of subsets of $I$, called \emph{faces}, such that any subset of a face is a face i.e $J \in C$ and $K \subset J$ implies $J \in C$. The \emph{1-skeleton} of a simplicial complex is the graph formed by its faces of cardinality 2.

By Proposition 21.1 of \cite{AA}, the vector species $SC$ of simplicial complexes is a sub-monoid of the Hopf monoid of simple hypergraphs.

\begin{corollary}
Let $I$ be a set, $C\in SC[I]$ and $g$ its 1-skeleton. Then $\chi^{SC}_I(C)$ is the chromatic polynomial of $g$.
\end{corollary}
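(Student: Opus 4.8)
The plan is to show that the basic invariant $\chi^{SC}$ of a simplicial complex $C$ depends only on its $1$-skeleton $g$, and then invoke the previous corollary identifying the basic invariant of a graph with its chromatic polynomial. Since $SC$ is a sub-monoid of the Hopf monoid of simple hypergraphs (Proposition 21.1 of \cite{AA}) and $\chi^{SC}$ is by definition the restriction of the basic invariant of simple hypergraphs (cf.\ Proposition \ref{shg} and the remark that the basic character restricts correctly, because a discrete simple hypergraph on a singleton is the same in $SHG$ and in $SC$), I can compute $\chi^{SC}_I(C)(n)$ using Theorem \ref{chi}: it is the number of colorings of $C$, viewed as a simple hypergraph, with $[n]$ such that every face has a unique maximal vertex.

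The key observation is that a coloring $S$ of $C$ has the property that every face has a unique maximal vertex \emph{if and only if} no two distinct vertices lying in a common face receive the same color, i.e.\ if and only if $S$ restricted to the $1$-skeleton $g$ is a proper coloring of $g$. First I would argue the forward direction: if some edge $\{v,w\}$ of $g$ (equivalently, a face of $C$ of cardinality $2$) is monochromatic, then that face itself has two maximal vertices, contradiction. For the converse, I would use the defining closure property of a simplicial complex: let $e$ be any face of $C$ with $|e|\ge 2$. For any two distinct $v,w\in e$, the pair $\{v,w\}$ is a subset of $e$ hence a face of $C$, hence an edge of $g$, so $S(v)\ne S(w)$ by properness on $g$. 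Thus all vertices of $e$ get distinct colors, so $e$ has exactly one maximal vertex. Faces of cardinality $\le 1$ trivially have a unique maximal vertex. Hence the colorings counted by $\chi^{SC}_I(C)(n)$ are exactly the proper colorings of $g$ with $n$ colors, so $\chi^{SC}_I(C)(n)$ equals the chromatic polynomial of $g$ evaluated at $n$; since two polynomials agreeing at all positive integers are equal, $\chi^{SC}_I(C)$ is the chromatic polynomial of $g$.

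The main subtlety — though it is more of a bookkeeping point than a genuine obstacle — is making sure the character and coproduct of $SC$ used in Theorem \ref{inv} are genuinely the restrictions of those on simple hypergraphs, so that Theorem \ref{chi} applies verbatim; this is exactly what Proposition 21.1 of \cite{AA} (that $SC$ is a sub-monoid of $SHG$) together with the already-recorded fact that the basic invariant behaves well under passing to simple hypergraphs guarantees, and the basic character is preserved because discreteness on singletons is intrinsic. Once that is in place, everything reduces to the elementary combinatorial equivalence above, which is where the downward-closure axiom of simplicial complexes is essential: it is precisely the reason the higher faces impose no constraint beyond those already imposed by the $1$-skeleton. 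Finally, I would remark that the analogous statement for the reciprocity evaluation follows the same way from Theorem \ref{chi-n}, recovering Stanley's reciprocity for $g$.
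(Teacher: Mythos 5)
Your proof is correct and follows the route the paper intends: restrict the basic invariant of (simple) hypergraphs to the sub-monoid $SC$ via Proposition~\ref{shg} and Proposition~21.1 of \cite{AA}, apply Theorem~\ref{chi}, and observe that by downward-closure of faces the unique-maximal-vertex condition on every face is equivalent to properness of the coloring on the $1$-skeleton. The only (harmless) loose end is the degenerate empty face, which carries no constraint under the paper's own conventions, so nothing is missing.
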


\subsection{Paths}
\label{paths}
Proposition 25.7 of \cite{AA} states that the Hopf monoid of set of paths is isomorphic to the Hopf monoid of associahedra which is a sub-monoid of a quotient of the Hopf monoid of generalized permutahedra. Hence, it should be possible to deduce the result of this subsection from \cite{AA}.

A \emph{word} on $I$ is a total ordering of $I$. The \emph{paths} on $I$ are the words on $I$ quotiented by the relation $w_1\dots w_{|I|} \sim w_{|I|}\dots w_1$. A \emph{set of paths} $\alpha$ of $I$ is a partition $(I_1,\dots, I_l)$ of $I$ with a path $s_i$ on each part $I_i$ and we will note $\alpha = s_1|\dots|s_l$. The vector species $F$ of sets of paths admits a Hopf monoid structure, the product and co-product are given by, for $I=S\sqcup T$:
\begin{align*}
\mu_{S,T}: F[S]\otimes F[T] &\rightarrow F[I] & \Delta_{S,T}: F[I] &\rightarrow F[S]\otimes F[T] \\
\alpha_1\otimes \alpha_2 &\mapsto \alpha_1\sqcup \alpha_2 & \alpha &\mapsto \alpha_{|S}\otimes \alpha_{/S}
\end{align*}
where if $\alpha = s_1|\dots|s_l$, $\alpha_{|S}= s_1\cap S|\dots| s_l\cap S$ forgetting the empty parts and $\alpha_{/S}$ is the set of paths obtained by replacing each occurrence of an element of $S$ in $\alpha$ by the separation symbol $|$. A discrete set of paths is then a set of paths where all paths have only one element.

\begin{example}
For $I=\{a,b,c,d,e,f,g\}$ and $S=\{b,c,e\}$ and $T=\{a,d,f,g\}$ we have: $$\Delta_{S,T}(bfcg|aed) = bc|e\otimes f|g|a|d$$
\end{example}


\begin{example}
For $I=\{a,b,c,d,e,f,g\}$ and $\alpha=bfcg|aed$, $l(\alpha)$ is the following graph:
\begin{center}
\includegraphics[scale=1.5]{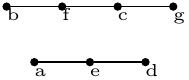}
\end{center}
\end{example}

We consider that a binary tree is (\emph{strictly}) \emph{compatible} with a coloring if each vertex is of color (strictly) greater than its children.

\begin{corollary}
Let $I$ be a set and $\alpha$ be a path on $I$. Then $\chi^F_I(\alpha)(n)$ is the number of strictly compatible pairs of binary trees with $|I|$ vertices and colorings with $[n]$ and $\chi^F_I(\alpha)(-n)$ is the number of compatible pairs of binary trees with $|I|$ vertices and colorings with $[n]$. In particular $\chi^F_I(\alpha)(-1) = C_{|I|}$ where $C_n = \frac{1}{n+1}\binom{2n}{n}$ is the $n$-th Catalan number.
\end{corollary}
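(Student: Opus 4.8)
The plan is to deduce the corollary from Theorems~\ref{chi} and~\ref{chi-n} by exhibiting a suitable bijection between the colorings-with-one-maximal-vertex picture for a path and the trees-with-colorings picture. First I would recall that a path $\alpha = s_1|\dots|s_l$, viewed inside $HG[I]$, corresponds (after the isomorphism of Proposition~25.7 of \cite{AA} or directly from the coproduct formula) to a hypergraph $H_\alpha$ whose edges are exactly the subpaths of the $s_i$; the key structural fact is that $H_\alpha$ has no isolated vertices as soon as $|I|\geq 2$ (isolated vertices contribute the trivial factor $n^{|J_H|}$ and can be handled separately, or folded into single-vertex paths), so $J_{H_\alpha}=\emptyset$ and the formulas of the two theorems apply cleanly.

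Next, the main work is a bijection $\Phi$ between the acyclic orientations of $H_\alpha$ and binary trees on $|I|$ vertices whose in-order traversal is $\alpha$. An acyclic orientation $f$ of $H_\alpha$ assigns to each subpath-edge $e$ a vertex $f(e)\in e$; acyclicity forces these choices to be "consistent" in the sense that they are exactly the data of a binary search tree shape on the linear order $\alpha$: the orientation picks, for the whole path, a root (the maximal vertex of the edge equal to the whole $s_i$), and recursively the two halves of the path become the left and right subtrees. I would make this precise by induction on $|I|$, checking that the acyclicity condition of the orientation is equivalent to the recursive well-definedness of the tree, and that the cycle-freeness translates into the "no contradictions between nested subpaths" condition. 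Under $\Phi$, a coloring $S$ is compatible (resp.\ strictly compatible) with $f$ — meaning $S(f(e))=\max(S(e))$ for every edge $e$ (resp.\ $f(e)$ is the \emph{unique} maximal vertex) — precisely when $S$, read on the tree $\Phi(f)$, gives each internal vertex a color $\geq$ (resp.\ $>$) the colors of its two children, i.e.\ $S$ is compatible (resp.\ strictly compatible) with $\Phi(f)$ in the sense defined just above the corollary. This equivalence is essentially immediate once one observes that "$v$ is a child of $w$ in $\Phi(f)$" is the same as "$\{v,w\}$ (or the relevant subpath) is an edge $e$ with $f(e)=w$ and $v$ the neighbour-endpoint"; checking it is a routine unwinding.

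With $\Phi$ in hand, Theorem~\ref{chi} gives that $\chi^F_I(\alpha)(n)=\chi_I(H_\alpha)(n)$ counts strictly compatible pairs (acyclic orientation, coloring), which by $\Phi$ is the number of strictly compatible pairs (binary tree with in-order $\alpha$, coloring with $[n]$); since every binary tree on $|I|$ vertices arises (uniquely) as some $\Phi(f)$, and the count does not depend on which path $\alpha$ we started from, this is the number of strictly compatible pairs of binary trees with $|I|$ vertices and $[n]$-colorings. Likewise Theorem~\ref{chi-n} gives $(-1)^{|I|}\chi^F_I(\alpha)(-n)$ counting compatible pairs, hence the same count with "compatible" in place of "strictly compatible". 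Finally, setting $n=1$: a binary tree is compatible with the unique coloring by the single color $1$ automatically, so $(-1)^{|I|}\chi^F_I(\alpha)(-1)$ equals the total number of binary trees on $|I|$ vertices, which is $C_{|I|}$.

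I would also need to confirm Proposition~\ref{shg}-style compatibility, namely that $\chi^F$ really is computed by the hypergraph invariant $\chi$ restricted along the embedding $F\hookrightarrow$ (hypergraphs): this should follow because the embedding is a morphism of Hopf monoids and sends discrete sets of paths to discrete hypergraphs, so it intertwines the basic characters and hence the basic invariants — exactly the argument that will be used for Proposition~\ref{shg} and the graph/simplicial-complex corollaries. The main obstacle, and the only genuinely content-bearing step, is setting up and verifying the bijection $\Phi$ between acyclic orientations of $H_\alpha$ and binary trees: one has to be careful that repeated "edges" (subpaths) do not cause issues, that the acyclicity of $f$ matches exactly the tree condition rather than something weaker, and that $f(H_\alpha)$, the image, interacts correctly with the compatibility of colorings (internal vertices of the tree $\leftrightarrow$ vertices in $f(H_\alpha)$, leaves $\leftrightarrow$ vertices not in $f(H_\alpha)$). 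Everything else is bookkeeping on top of Theorems~\ref{chi} and~\ref{chi-n}.
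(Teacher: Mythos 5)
Your overall architecture---encode the path $\alpha$ as the hypergraph $H_\alpha$ whose edges are the intervals of $\alpha$, apply Theorems~\ref{chi} and~\ref{chi-n}, and transport acyclic orientations to binary trees via the Cartesian-tree recursion---can be made to work, and the bijection $\Phi$ together with the translation of (strict) compatibility into the parent--child colour inequalities is correct in outline (for an interval $e$, $\Phi^{-1}(T)(e)$ is the root of the smallest subtree of $T$ containing $e$, and acyclicity is exactly the consistency of these choices). The genuine gap is the step you defer to a ``Proposition~\ref{shg}-style'' argument: the map $\alpha\mapsto H_\alpha$ is \emph{not} a morphism of Hopf monoids, so you cannot obtain $\chi^F_I(\alpha)=\chi_I(H_\alpha)$ by intertwining along an embedding. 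Concretely, for $\alpha=bfcg$ and $S=\{b,c\}$ the path restriction is $\alpha_{|S}=bc$, whose interval hypergraph contains the edge $\{b,c\}$, whereas the hypergraph restriction $(H_\alpha)_{|S}$ contains no edge of size $\geq 2$, since no interval of $bfcg$ lies inside $\{b,c\}$. The paper itself warns that even graphs and simple hypergraphs are not stable under the hypergraph contraction, and that $F$ is related to hypergraphs only through a \emph{quotient} of generalized permutahedra; there is no sub-monoid embedding $F\hookrightarrow HG$ to restrict along, so the linchpin identity of your proposal is left unproved.

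The identity $\chi^F_I(\alpha)(n)=\chi_I(H_\alpha)(n)$ is nevertheless true, but it requires its own argument. Computing $\chi^F$ directly from the iterated path coproduct, a colouring contributes $1$ exactly when each piece obtained by recursively splitting the path at its (necessarily unique) vertex of minimal colour again has a unique minimal vertex; this is equivalent to ``any two equal-coloured vertices are separated by a strictly smaller one.'' On the other hand $\chi_I(H_\alpha)(n)$ counts colourings in which every interval has a unique maximal vertex, equivalently ``any two equal-coloured vertices are separated by a strictly larger one,'' and the substitution $c\mapsto n+1-c$ exchanges the two families. (For $\alpha=abc$ both counts equal $n(n-1)(n-2)+\binom{n}{2}$, while the chromatic polynomial of the underlying path graph does not, so the interval encoding---not the edge-set of the path graph---is the right one, and the check is not vacuous.) Once this bridge is supplied, either by the colour-reversal bijection or by running the binary-search-tree correspondence directly on the ``unique recursive minimum'' description of $\chi^F$ and repeating the reciprocity computation on $F$, the rest of your argument, including $(-1)^{|I|}\chi^F_I(\alpha)(-1)=|\mathcal{A}_{H_\alpha}|=C_{|I|}$, goes through.
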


\bibliographystyle{plain}
\bibliography{abstract}
\end{document}